\documentclass{amsart}
\usepackage{graphicx}
\usepackage{amsfonts,amsmath,amstext,amsthm,amssymb}

\numberwithin{equation}{section}

%    Absolute value notation

%    Blank box placeholder for figures (to avoid requiring any
%    particular graphics capabilities for printing this document).

\vfuzz2pt % Don't report over-full v-boxes if over-edge is small
\hfuzz2pt %
% THEOREMS -------------------------------------------------------
\newtheorem{theorem}{Theorem}[section]
\newtheorem{lemma}[theorem]{Lemma}
\newtheorem{corollary}[theorem]{Corollary}

\theoremstyle{definition}

\newtheorem{OldTheorem}{Theorem}

\theoremstyle{remark}

\def\ZI{\ensuremath{\mathbb I}}
\def\ZN{\ensuremath{\mathbb N}}

\def\zP{\ensuremath{\mathcal P}}
\def\ZT{\ensuremath{\mathbb T}}
\def\ZR{\ensuremath{\mathbb R}}
\def\md#1#2\emd{\ifx0#1
\begin{equation*} #2 \end{equation*}\fi  %  single line display, no number
\ifx1#1\begin{equation}#2\end{equation}\fi   % single line display, number
\ifx2#1\begin{align*}#2\end{align*}\fi   % aligned display, no number
\ifx3#1\begin{align}#2\end{align}\fi    % aligned display, number
\ifx4#1\begin{gather*}#2\end{gather*}\fi  % multline, not align, no number
\ifx5#1\begin{gather}#2\end{gather}\fi   % multinline, not align
\ifx6#1\begin{multline*}#2\end{multline*}\fi  %  display too long for one line
\ifx7#1\begin{multline}#2\end{multline}\fi  % as above, with numbers
\ifx8#1\begin{multline*}\begin{split}#2\end{split}\end{multline*}\fi
\ifx9#1\begin{multline}\begin{split}#2\end{split}\end{multline}\fi
}
\newcommand {\e }[1]{(\ref{#1})}
\newcommand {\lem }[1]{Lemma \ref{#1}}
\newcommand {\trm }[1]{Theorem \ref{#1}}
\newcommand {\otrm }[1]{Theorem \ref{#1}}

% ----------------------------------------------------------------
\begin{document}

\title{On a theorem of Littlewood}%
\author{G. A. Karagulyan }

\address{Institute of Mathematics of Armenian
National Academy of Sciences, Baghramian Ave.- 24b, 0019,
Yerevan, Armenia}%
\email{g.karagulyan@yahoo.com}%
\author{ M. H. Safaryan}
\address{Yerevan State University,
Alek Manukyan, 1, 0049,
Yerevan, Armenia}%
\email{mher.safaryan@gmail.com }
\subjclass{42B25}%
\keywords{Fatou theorem, Littlewood theorem, Poisson kernel}%

%\date{}%
%\dedicatory{}%
%\commby{}%
% ----------------------------------------------------------------
\begin{abstract}
In 1927 Littlewood constructed an example of bounded holomorphic function on the unit disk, which diverges almost everywhere along rotated copies of any given curve in the unit disk ending tangentially to the boundary. This theorem was the complement of a positive theorem of Fatou 1906, establishing almost everywhere nontangential convergence of bounded holomorphic functions.  There are several generalizations of the Littlewood's  theorem which proofs are based on the specific properties of Poisson kernel. We generalize Littlewood's theorem for operators having general kernels.
\end{abstract}
\maketitle
\section{Introduction}
A classical theorem of Fatou \cite{Fat} asserts that any bounded analytic function on the unit disc $D=\{|z|<1\}$ has nontangential limit for almost all boundary points. This theorem has been generalized in different directions. It is well known that the functions from Hardy spaces and  harmonic functions posses the a.e. nontangential convergence property too. J. Littlewood \cite{Lit} made an important complement to these results, proving essentiality of nontangential approach in Fatou's theorem. The following formulation of Littlewood's theorem fits to the further aim of the present paper.
\begin{OldTheorem}[Littlewood, 1927]
If a continuous function $\lambda(r):[0,1]\to \ZR$ satisfies the conditions
 \md1\label{0-1}
\lambda(1)=0,\quad  \lim_{r\to 1}\lambda(r)/(1-r)=\infty,
 \emd
 then there exists a bounded analytic function $f(z)$, $z\in D$, such that the boundary limit
 \md0
 \lim_{r\to 1}f\left(re^{i\left(x+\lambda(r)\right)}\right)
 \emd
 does not exist almost everywhere on $\ZT$.
\end{OldTheorem}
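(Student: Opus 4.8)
The plan is to realize $f$ as an infinite Blaschke product $B$ and, invoking the classical Lindel\"of theorem, to reduce the whole problem to a one-sided statement about $\abs{B}$ along the curves. Write $\gamma_x(r)=re^{i(x+\lambda(r))}$; since $\lambda(1)=0$ this arc terminates at $e^{ix}$. If $\lim_{r\to1}B(\gamma_x(r))$ existed, Lindel\"of's theorem would force the nontangential limit $B^*(e^{ix})$ to exist and to equal it, while $B$ being inner gives $\abs{B^*}=1$ a.e.\ on $\ZT$. Hence it suffices to build a Blaschke product $B$ with the single property
\md0
\liminf_{r\to1}\abs{B(\gamma_x(r))}\le \tfrac14\quad\text{for every }x\in\ZT;
\emd
then for a.e.\ $x$ a limit along $\gamma_x$ cannot exist, since it would have modulus $1$.

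Everything then rests on choosing the scales of the zero set correctly, and this is the only place where \e{0-1} enters. First I would pick radii $r_1<r_2<\cdots\to1$ with $1-r_k\le 2^{-k}$ and $r_{k+1}>r_k+\tfrac14(1-r_k)$ such that $\OSC_{[r_k,\,r_k+(1-r_k)/4]}\lambda\ge 2^k(1-r_k)$; this is possible because, were the oscillation of $\lambda$ over every window $[r,r+(1-r)/4]$ eventually bounded by $M(1-r)$, geometric summation along the points $\rho_{n+1}=\rho_n+\tfrac14(1-\rho_n)$ would give $\abs{\lambda(r)}=\abs{\lambda(r)-\lambda(1)}\le\OSC_{[r,1]}\lambda\lesssim M(1-r)$, contradicting $\lambda(r)/(1-r)\to\infty$. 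Next, on the circle $\abs{z}=r_k$ I would place $N_k=\lceil 2\pi/(2^k(1-r_k))\rceil$ equally spaced zeros $z_{k,\ell}=r_ke^{2\pi i\ell/N_k}$, and take $B$ to be the Blaschke product with zero set $\{z_{k,\ell}\}$; the Blaschke condition holds since $\sum_{k,\ell}(1-\abs{z_{k,\ell}})=\sum_kN_k(1-r_k)\le\sum_k\bigl(2\pi\,2^{-k}+(1-r_k)\bigr)<\infty$. Finally, fix $x$ and $k$: as $r$ runs over $[r_k,r_k+(1-r_k)/4]$ the angle $x+\lambda(r)$ sweeps continuously an arc of length $\OSC\lambda\ge 2^k(1-r_k)\ge 2\pi/N_k$, hence it hits some grid angle $2\pi\ell_0/N_k$ at a parameter $r^*$ in that window; then $\gamma_x(r^*)$ and $z_{k,\ell_0}$ have equal argument, so $\abs{\gamma_x(r^*)-z_{k,\ell_0}}=\abs{r^*-r_k}\le\tfrac14(1-r_k)$ while $1-\overline{z_{k,\ell_0}}\gamma_x(r^*)=1-r_kr^*\ge 1-r_k$, whence $\abs{B(\gamma_x(r^*))}\le\abs{r^*-r_k}/(1-r_kr^*)\le\tfrac14$. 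As $r^*\to1$ when $k\to\infty$, this yields the displayed inequality for every $x$ and finishes the construction.

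The real difficulty is not a single estimate but the simultaneous bookkeeping in the middle step: the factor $2^k$ gained from the tangential oscillation of $\lambda$ is exactly what lets the $k$-th annulus of zeros have a ``shadow'' covering all of $\ZT$ (the sweep of $x+\lambda(r)$ exceeds the grid spacing $2\pi/N_k$) while contributing only $\sim 2^{-k}$ to $\sum(1-\abs{z_{k,\ell}})$. Either requirement alone is trivial; what must be squeezed out of \e{0-1}, via the telescoping argument, is that at a cofinal sequence of scales the angular sweep is large enough to meet both at once. One should also note that Lindel\"of's theorem is used only because $f$ is holomorphic; a version of the argument for general kernels, as the abstract promises, would have to replace that reduction by a direct oscillation estimate along $\gamma_x$.
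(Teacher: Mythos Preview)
Your argument is correct and proves exactly Theorem~A. The paper, however, does not prove Theorem~A directly; it obtains it (in fact the stronger everywhere statement of Theorem~B) by specializing its Theorem~1.2 to the Poisson kernel, and the underlying construction is genuinely different from yours. You place simple zeros $r_k e^{2\pi i\ell/N_k}$ on a sparse sequence of circles and use Lindel\"of's theorem to reduce the whole problem to the single modulus bound $\liminf_{r\to1}\lvert B(\gamma_x(r))\rvert\le\tfrac14$, which follows because in each layer the curve passes radially through one of the zeros. The paper instead multiplies the finite Blaschke products $b(n_k,\delta_k,z)=(z^{n_k}-\rho_k^{n_k})/(\rho_k^{n_k}z^{n_k}-1)$, exploits the dichotomy $b_k\approx -1$ on $U(n_k,\delta_k)$ versus $b_k\approx 1$ off $U(n_k,\sqrt[4]{\delta_k})$ (Lemma~3.1), and produces for every $x$ two parameters $r',r''\in[u_k,v_k]$ at which the convolution $\Phi_r(x+\lambda(r),B)$ is close to $-B_{k-1}(2\pi j_0/n_k)$ and to $+B_{k-1}(2\pi j_0/n_k)$ respectively, forcing an oscillation of nearly $2$ without any appeal to boundary-value theory. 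Your route is shorter and more elementary for the analytic case, but---as you yourself remark---it is tied to holomorphy through Lindel\"of and yields only a.e.\ divergence (the exceptional set being where $\lvert B^*\rvert\neq1$); the paper's direct oscillation argument is precisely what permits the extension to arbitrary kernels $\varphi_r$ satisfying \e{c1}--\e{0-3} and simultaneously upgrades the conclusion from almost everywhere to everywhere.
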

 A simple proof of this theorem was given by Zygmund \cite{Zyg}.
In \cite{LoPi} Lohwater and Piranian proved, that in Littlewood's theorem almost everywhere divergence can be replaced to everywhere. Moreover, it is proved
 \begin{OldTheorem}[Lohwater and Piranian, 1957]\label{OT1}
 If a continuous function $\lambda(r)$  satisfies \e{0-1}, then there exists a Blaschke product $B(z)$ such that the limit
 \md0
 \lim_{r\to 1}B\left(re^{i\left(x+\lambda(r)\right)}\right)
 \emd
 does not exist for any $x\in \ZT$.
 \end{OldTheorem}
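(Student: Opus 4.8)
The plan is to construct the Blaschke product $B$ directly, by distributing its zeros in a sequence of thin annuli approaching the boundary, arranged so that for \emph{every} $x\in\ZT$
\[
\liminf_{r\to1}|B(re^{i(x+\lambda(r))})|=0\qquad\text{and}\qquad\limsup_{r\to1}|B(re^{i(x+\lambda(r))})|=1 ,
\]
which already forces the limit in question not to exist. Throughout write $\gamma_x(r)=re^{i(x+\lambda(r))}$ for the tangential curve with base point $x$, and $\phi_a(z)=(a-z)/(1-\bar az)$. We take $B=\prod_n b_n$, where $b_n$ is a \emph{finite} Blaschke product all of whose zeros lie in an annulus $A_n=\{\rho_n\le|z|\le\sigma_n\}$, the radii being chosen recursively with
\[
0<\rho_1<\sigma_1<\tau_2<\rho_2<\sigma_2<\tau_3<\rho_3<\cdots\uparrow1
\]
and increasing to $1$ very fast; the circles $|z|=\tau_n$, sitting in the gaps between consecutive annuli, will be the ``clean'' circles carrying the $\limsup$ information.

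For the $\liminf$ part, inside $A_n$ I would place a zero of high multiplicity $p_n$ (with $p_n\to\infty$) at each point $r_ie^{i\beta}$, where $\beta$ runs over a $\tfrac12\delta_n$-net $\Lambda_n$ of $\ZT$ and $r_0=\rho_n<r_1<\cdots<r_{L_n}$ is the geometric progression with $1-r_{i+1}=(1-c)(1-r_i)$, continued until $r_{L_n}\ge\sigma_n$; here $c\in(0,1)$ is fixed once and for all so that consecutive $r_i$ lie at hyperbolic distance $\le R$ ($R$ a fixed constant), while $\delta_n:=\lambda(\rho_n)-\lambda(\sigma_n)$, with $\sigma_n$ taken so close to $1$ that $\lambda(\sigma_n)<\tfrac12\lambda(\rho_n)$, so that $\delta_n>\tfrac12\lambda(\rho_n)>0$. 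The key observation is: given $x$, the continuous function $r\mapsto x+\lambda(r)$ runs, by the intermediate value theorem, through the whole arc $[x+\lambda(\sigma_n),x+\lambda(\rho_n)]$ of length $\delta_n$, so some $\beta\in\Lambda_n$ equals $x+\lambda(r^\ast)$ for a suitable $r^\ast\in[\rho_n,\sigma_n]$. Then $\gamma_x(r^\ast)=r^\ast e^{i\beta}$ lies on the ray carrying the zeros $r_ie^{i\beta}$, at hyperbolic distance $\le R$ from one of them, whence $|B(\gamma_x(r^\ast))|\le|b_n(\gamma_x(r^\ast))|\le(\tanh R)^{p_n}$; as $n\to\infty$ this tends to $0$ while $r^\ast=r^\ast(x,n)\ge\rho_n\to1$, giving $\liminf_{r\to1}|B(\gamma_x(r))|=0$ for every $x$. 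Only $\approx\delta_n^{-1}$ values of $\beta$ are needed to catch all $x$, and this is affordable precisely because the tangential hypothesis \e{0-1} makes $\delta_n\asymp\lambda(\rho_n)$ much larger than $1-\rho_n$: spreading the zeros over the whole radial width of $A_n$ is essential, since a single circle would force $\approx(1-\rho_n)^{-1}$ clusters and non-summable total mass $\approx p_n$.

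For the $\limsup$ part I would use the circles $|z|=\tau_n$. Every curve meets each of them (at $r=\tau_n$), and by increasing the radii fast enough these circles can be kept hyperbolically far from \emph{all} the zeros of $B$ (which lie at radii $\le\sigma_{n-1}$ or $\ge\rho_n$). The elementary inequality
\[
\log\frac1{|b_k(z)|}\lesssim(1-|z|^2)\sum_{b_k(a)=0}\frac{1-|a|^2}{|1-\bar az|^2}
\]
then gives, summing over $k$, that $|B(\gamma_x(\tau_n))|\to1$ uniformly in $x$: writing $M_k=\sum_{b_k(a)=0}(1-|a|)$ for the total zero mass of $b_k$, the terms $k<n$ contribute $\lesssim(1-\tau_n)\sum_{k<n}M_k/(1-\sigma_k)^2$ and the terms $k\ge n$ contribute $\lesssim(1-\tau_n)^{-1}\sum_{k\ge n}M_k$, and placing $\tau_n$ in the hyperbolic middle of the gap $(\sigma_{n-1},\rho_n)$ balances and kills both. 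Hence $\limsup_{r\to1}|B(\gamma_x(r))|=1$.

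Finally $B$ is a genuine (convergent) Blaschke product, because
\[
M_n=\#\Lambda_n\cdot p_n\cdot\sum_{i=0}^{L_n}(1-r_i)\lesssim\frac{p_n(1-\rho_n)}{\delta_n}\lesssim p_n\,\frac{1-\rho_n}{\lambda(\rho_n)},
\]
and \e{0-1} gives $(1-\rho)/\lambda(\rho)\to0$, so $\rho_n$ can be chosen close enough to $1$ that $M_n\le2^{-n}$. The construction is assembled by induction on $n$: pick $p_n$; then $\rho_n$ close enough to $1$ to make $M_n$ small and the gap $(\tau_n,\rho_n)$ hyperbolically wide; then $\sigma_n$ close enough to $1$ that $\lambda(\sigma_n)<\tfrac12\lambda(\rho_n)$; then $\tau_{n+1}$; and so on. I expect the one genuine difficulty to be exactly this bookkeeping: the clean circle $|z|=\tau_n$ must be simultaneously far from the zeros of $b_{n-1}$ (which forces $1-\tau_n$ small in terms of $(1-\sigma_{n-1})^2/M_{n-1}$) and far from the zeros of $b_n$ (which forces $1-\tau_n$ large in terms of $M_n$), so the later masses $M_n,M_{n+1},\dots$ have to be shrunk fast enough to leave room; but each such requirement has the form ``take the next radius close enough to $1$'', and no single estimate is hard. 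Everything else — the bound for $|\phi_a|$ on a hyperbolic ball, the intermediate value theorem for $\lambda$, and convergence of $\prod_n b_n$ — is routine.
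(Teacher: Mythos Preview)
Your approach is sound and the bookkeeping you flag is genuinely the only delicate part; the inductive scheme you sketch (choose $\tau_n$ close enough to $1$ that the fixed finite product $\prod_{k<n}b_k$ has modulus near $1$ on $|z|=\tau_n$, then force all later masses $M_k$ to sum to $\ll 1-\tau_n$) does close. But it is a different construction from the one the paper gives. The paper obtains Theorem~B as the Poisson-kernel case of its Theorem~\ref{T2}, and that proof uses finite Blaschke products $b_k(z)=(z^{n_k}-\rho_k^{n_k})/(\rho_k^{n_k}z^{n_k}-1)$ whose zeros sit on a \emph{single} circle $|z|=\rho_k$; the key lemma is that the \emph{boundary function} $b_k(e^{ix})$ is $\approx -1$ on tiny arcs around the $n_k$-th roots of unity and $\approx +1$ elsewhere. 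One then uses the intermediate value theorem on $\lambda$ to find, for each $x$ and each $k$, radii $r',r''$ with $x+\lambda(r')$ a root of unity and $x+\lambda(r'')$ a midpoint, so that $\Phi_{r'}(x+\lambda(r'),B)\approx -B_{k-1}(\cdot)$ and $\Phi_{r''}(x+\lambda(r''),B)\approx +B_{k-1}(\cdot)$: the \emph{complex value} oscillates by about $2$. Your argument instead spreads high-multiplicity zeros over thick annuli and detects oscillation of the \emph{modulus} inside the disc ($\liminf|B|=0$ by hitting a zero, $\limsup|B|=1$ on clean circles). What the paper's route buys is generality---the same argument handles any approximate identity $\varphi_r$ with $\beta=1$, since it never evaluates $B$ inside the disc but only integrates its boundary values; your route is tied to the Poisson case (where $\Phi_r(x,B)=B(re^{ix})$) but yields the sharper pointwise conclusion $\liminf|B\circ\gamma_x|=0$, $\limsup|B\circ\gamma_x|=1$, and is arguably more geometric.
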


 In \cite{Aik1} Aikawa  obtained a similar everywhere divergence theorem for bounded harmonic functions on the unit disk, giving a positive answer to a problem raised by Barth [\cite{Bar}, p. 551].
 \begin{OldTheorem}[Aikawa]
 If $\lambda(r)$ is continuous and satisfies the condition \e{0-1},
 then there exists a bounded harmonic function $u(z)$ on the unit disc, such that the limit
 \md0
\lim_{r\to 1} u\left(re^{i\left(x+\lambda(r)\right)}\right)
 \emd
 does not exist for any $x\in \ZT$.
 \end{OldTheorem}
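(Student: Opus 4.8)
\emph{Plan.} I would realise $u$ as a Poisson integral $u=P[\phi]$ of a function $\phi\in L^\infty(\ZT)$, chosen so that along \emph{every} tangential curve $r\mapsto re^{i(x+\lambda(r))}$ the Poisson kernel ``scans'' a shrinking neighbourhood of the endpoint $e^{ix}$ and keeps meeting a bump of $\phi$, forcing $u$ to oscillate. The first ingredient is a geometric reformulation of \e{0-1}. Put $h=1-r$ and $\xi(h)=\lambda(1-h)$; then $\xi(h)\to0$ while $\xi(h)/h\to\infty$, and since $\lambda(1)=0$ the increments $|\lambda(1-h/2)-\lambda(1-h)|$ cannot all be $O(h)$ (otherwise telescoping over dyadic scales would give $\xi(h)=O(h)$). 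Hence there are scales $h_1>h_2>\cdots\to0$, as sparse as we wish, with $|\lambda(1-h_k/2)-\lambda(1-h_k)|\ge M_kh_k$ and $M_k\to\infty$. For $1-r\in[h_k/2,h_k]$ the point $re^{i(x+\lambda(r))}$ stays at height $\asymp h_k$ while its argument sweeps an arc of length $\ge M_kh_k$; equivalently the Poisson kernel there behaves like an average over a window of width $\asymp h_k$ whose centre $x+\xi(1-r)$ runs through $\gtrsim M_k$ consecutive windows of length $h_k$.

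The building block is as follows. For a scale $w$ let $\psi_w$ be the indicator of disjoint arcs (``spikes'') of length $w$ repeated with period $v(w)$, where $w\ll v(w)\le\max_{[1-w,1]}\lambda$; this is possible precisely because $\xi(w)/w\to\infty$. The decisive feature is the mismatch $v(w)\gg w$: given any $x$ there is a spike of $\psi_w$ within $v(w)$ of $x$ on the appropriate side, and by the intermediate value theorem a height $h\le w$ with $x+\xi(h)$ in the middle third of that spike; at the corresponding point of the curve the scanning window of width $\asymp h$ sits inside the spike, so $P[\psi_w]\ge c_0$ there for an absolute $c_0>0$, whereas at heights whose scan centre lies in a gap $P[\psi_w]$ is $\lesssim hw/v(w)^2$, i.e.\ negligible. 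Thus $0\le P[\psi_w]\le 1$ and $P[\psi_w]$ oscillates by at least $c_0$ along every $\Gamma_x$ at heights near $w$.

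One then superposes: $\phi=\sum_k\varepsilon_k\,\psi_{w_k}(\,\cdot\,e^{-i\alpha_k})$, with the $w_k$ taken from the sequence $h_k$ and decreasing very fast, amplitudes $\varepsilon_k>0$, free phases $\alpha_k$, and $u:=P[\phi]$. Cross-scale interference is controlled by the rapid decay of $w_k$: at a fine height $\asymp w_k$ a coarser array $\psi_{w_m}$ $(m<k)$ is essentially constant over the short swept arc unless that arc straddles one of its spike edges, and a finer array $\psi_{w_m}$ $(m>k)$ has spikes far narrower than the scanning window and so contributes uniformly little; so near height $w_k$ the oscillation of $u$ along $\Gamma_x$ is at least $c_0\varepsilon_k$ up to a small error, \emph{provided} the swept arc of $\Gamma_x$ does not straddle a coarse spike edge.

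\emph{The main obstacle}, and what separates this from the almost-everywhere statement of Littlewood's theorem, is to choose the scales, amplitudes and positions so that the oscillations do not merely survive for a.e.\ $x$ but for \emph{every} $x$, while keeping $\|\phi\|_\infty<\infty$ — the difficulty being that a generic tangential curve returns arbitrarily close to its endpoint, so it only ``sees'' the behaviour of $\phi$ in an arbitrarily small neighbourhood of $e^{ix}$. Concretely one must (i) take $\sum_k\varepsilon_k<\infty$, or organise the $\varepsilon_k$ into long blocks of slowly decaying size whose products with the block lengths stay bounded below, so that $\phi\in L^\infty$ once the spike sets are placed with bounded multiplicity, yet still ensure that for each $x$ there is a scale at which the surviving oscillation of $u\circ\Gamma_x$ is bounded below; and (ii) adjust the phases $\alpha_k$ so that the exceptional sets ``the swept arc of $\Gamma_x$ straddles a coarse spike edge at scale $w_k$'' — each of controlled, summable measure thanks to the fast decay of $w_k$ — have empty intersection over all $k$, so that no single $x$ is spoiled at every scale. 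Granting such an arrangement, for each $x\in\ZT$ one obtains $\limsup_{r\to1}u\big(re^{i(x+\lambda(r))}\big)>\liminf_{r\to1}u\big(re^{i(x+\lambda(r))}\big)$, so that $u$ is a bounded harmonic function on $D$ whose limit along the prescribed curve exists at no point of $\ZT$.
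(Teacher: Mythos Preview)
Your geometric picture is right and matches the paper's: periodic arrays of short arcs at scale $k$, with period comparable to $\lambda$ at a height $u_k$, so that for \emph{every} $x$ the intermediate value theorem produces an $r\in[u_k,v_k]$ with $x+\lambda(r)$ landing on a spike centre. The paper uses exactly this mechanism (its sets $U(n_k,5\delta_k)$, with $n_k=[5\pi/\lambda(u_k)]$, are your $\psi_{w_k}$).

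The gap is in how you assemble $\phi$. Writing $\phi=\sum_k\varepsilon_k\psi_{w_k}$ forces a real tension you identify but do not resolve: if $\sum\varepsilon_k<\infty$ then $\varepsilon_k\to0$ and the oscillation $c_0\varepsilon_k$ at scale $k$ dies, so the $\limsup-\liminf$ need not be positive; the alternative ``long blocks with bounded multiplicity'' is left as a wish. Likewise the phase-adjustment scheme in (ii) --- arranging that no $x$ is spoiled at \emph{every} scale --- would only give, at best, that each $x$ sees oscillation at \emph{some} scale, and with $\varepsilon_k\to0$ that is not enough; getting a uniform positive lower bound on the oscillation this way is not explained.

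The paper sidesteps both issues with one device you are missing: instead of \emph{summing} the spike arrays, it \emph{alternates} set operations. Define $E_1=U_1$ and then $E_k=E_{k-1}\cup U_k$ for $k$ odd, $E_k=E_{k-1}\setminus U_k$ for $k$ even; the indicators converge in $L^1$ to $\ZI_E$. Now $\phi=\ZI_E$ is automatically in $L^\infty$ with norm $1$, no amplitudes needed. At stage $k$ one has $|E\bigtriangleup E_k|\le\sum_{j>k}|U_j|$, and a simple absolute-continuity bound on the kernel makes $|\Phi_r(\cdot,\ZI_E)-\Phi_r(\cdot,\ZI_{E_k})|<2^{-k}$ for $r<v_k$; this single estimate replaces your entire ``coarse/fine interference'' analysis and the phase juggling. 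For odd $k$ one has $E_k\supset U_k$, hence $\Phi_r(x+\lambda(r),\ZI_E)$ is close to $1$ at the chosen $r$; for even $k$, $E_k\cap U_k=\varnothing$ and the value is close to $0$. Thus $\limsup=1$, $\liminf=0$ at every $x$, with no exceptional sets to manage.

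In short: keep your spike arrays and the IVT step, but replace the weighted sum by the alternating union/difference construction; that is the missing idea that makes the proof go through for every $x$ without any amplitude or phase bookkeeping.
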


 Almost everywhere convergence over some "semi" tangential regions were investigated by Nagel and Stein \cite{NaSt}, Di Biase \cite{DiBi}, Di Biase-Stokolos-Svensson-Weiss \cite{BSSW}.
P.~Sj\"{o}gren (\cite{Sog1}, \cite{Sog2}, \cite{Sog3}), J.-O.~R\"{o}nning (\cite{Ron1}, \cite{Ron2}, \cite{Ron3}),  I.~N.~Katkovskaya and V.~G.~ Krotov (\cite{Kro})
obtained some tangential convergence properties for the square root Poisson integrals.

It is well known that the theorems A,B and C may be also formulated in the terms of Poisson integral
\md0
\zP_r(x,f)=\frac{1}{2\pi }\int_\ZT P_r(x-t) f(t)dt,
\emd
since any bounded analytic or harmonic function on the unit disc can be written in this form, where  $f$ is either in $H^\infty$ or $L^\infty$. In addition, the proofs of these theorems are based on some properties of such functions.

In the present paper we consider general operators
\md1\label{Phi}
\Phi_r(x,f)=\int_\ZT \varphi_r(x-t)f(t)dt, \quad 0<r<1,
\emd
where the kernels $\varphi_r(x)\ge 0$ are arbitrary functions, satisfying
\md1\label{c1}
\int_\ZT\varphi_r(t)=1,\quad 0<r<1,
\emd
and for any numbers $\varepsilon>0$, $0<\tau<1$ there exists  $\delta >0$ such that
\md1\label{0-3}
 \int_e \varphi_r(t)dt<\varepsilon,\quad 0<r<\tau,
\emd
for any measurable $e\subset \ZT$ with $|e|<\delta$. We note, that \e{0-3} is an ordinary absolute continuity condition.  for example,  it is satisfied if $\sup_{0<r<\tau}\|\varphi_r\|_\infty<\infty$ for any $0<\tau<1$. It is clear, that the Poisson kernels satisfy these conditions.
\begin{theorem}\label{T1}
Let $\{\varphi_r\}$ be a family of nonnegative functions, possessing the properties \e{c1} and \e{0-3}. If a function $\lambda(r)\in C[0,1]$ satisfies the conditions
\md1\label{c2}
\lambda(1)= 0,\quad \beta=\limsup_{\delta \to 0}\liminf_{r\to 1}\int_{-\delta \lambda(r)}^{\delta \lambda (r)}\varphi_r(t)dt>\frac{1}{2},
\emd
then there exists a measurable set $E\subset \ZT$ such that
\md0
\limsup_{r\to 1}\Phi_r\left(x+\lambda(r),\ZI_E\right)- \liminf_{r\to 1}\Phi_r\left(x+\lambda(r),\ZI_E\right)\ge2\beta-1.
\emd
\end{theorem}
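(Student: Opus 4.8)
\emph{Proof strategy.}

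First I would simplify the defining relation \e{c2}. For $r$ close to $1$ we must have $\lambda(r)\neq0$ (otherwise $\beta=0$), and since $\varphi_r\ge0$ the map $\delta\mapsto\int_{-\delta\lambda(r)}^{\delta\lambda(r)}\varphi_r$ is nondecreasing; hence $g(\delta):=\liminf_{r\to1}\int_{-\delta\lambda(r)}^{\delta\lambda(r)}\varphi_r$ is nondecreasing in $\delta$, so that $\beta=\limsup_{\delta\to0}g(\delta)=\inf_{\delta>0}g(\delta)$. The upshot is that $g(\delta)\ge\beta$ for \emph{every} $\delta>0$: as $r\to1$, the kernel $\varphi_r$ keeps at least $\beta-o(1)$ of its mass in $[-\delta\lambda(r),\delta\lambda(r)]$ however small $\delta$ is. Picking $\delta_n\downarrow0$ and then $r_n\uparrow1$ fast enough, one obtains a sequence with $\sigma_n:=\delta_n\lambda(r_n)=o(\lambda(r_n))$ and $\int_{-\sigma_n}^{\sigma_n}\varphi_{r_n}>\beta-2^{-n}$; I will build $E$ along this sequence.

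Next, the elementary but crucial bounds. Write $T_n(x)=x+[\lambda(r_n)-\sigma_n,\lambda(r_n)+\sigma_n]$, and substitute $s=x+\lambda(r_n)-t$: if $E$ has $\varphi_{r_n}(x+\lambda(r_n)-\cdot)$-density close to $1$ on $T_n(x)$ then $\Phi_{r_n}(x+\lambda(r_n),\ZI_E)\ge\int_{-\sigma_n}^{\sigma_n}\varphi_{r_n}-2^{-n}>\beta-2^{-n+1}$, while if it has density close to $0$ on $T_n(x)$ then $\Phi_{r_n}(x+\lambda(r_n),\ZI_E)<1-\beta+2^{-n+1}$; note $\Phi_r(\cdot,\ZI_E)+\Phi_r(\cdot,\ZI_{\ZT\setminus E})\equiv1$, so the ``small'' case for $E$ is the ``large'' case for $\ZT\setminus E$. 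I would therefore split $\ZN=A\sqcup B$ into two infinite interlaced blocks and aim for a measurable $E$ such that, for a.e.\ $x$, $E$ is $\varphi_{r_n}$-dense near $x+\lambda(r_n)$ for infinitely many $n\in A$ and $\varphi_{r_n}$-sparse there for infinitely many $n\in B$; this yields $\limsup_r\Phi_r(x+\lambda(r),\ZI_E)\ge\beta$, $\liminf_r\Phi_r(x+\lambda(r),\ZI_E)\le1-\beta$, hence oscillation $\ge2\beta-1$ (the constant is exact because $\int_{-\sigma_n}^{\sigma_n}\varphi_{r_n}\to\beta$).

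It is essential that I ask only for \emph{density} near $x+\lambda(r_n)$ and not for $T_n(x)\subset E$ or $T_n(x)\cap E=\varnothing$: the latter, demanded at a.e.\ $x$, is self-contradictory — it would force the midpoints of the $T_n(x)$, which tend to $x$ from the right, to lie both in $\overline E$ and in $\overline{\ZT\setminus E}$ for a.e.\ $x$, i.e.\ $|\partial E|=2\pi$, whence $E^{\circ}=\varnothing$, killing the very inclusion demanded. By contrast a set with empty interior can perfectly well have $\varphi_{r_n}$-density oscillating between $\simeq1$ and $\simeq0$ along the off-centred shrinking windows $T_n(x)$. So $E$ should be an interval-free, Cantor-type set, produced by a lacunary construction in the spirit of Lohwater--Piranian and Aikawa: at stage $n$ one modifies $E$ only at the scale $\sim\lambda(r_n)$ — using the profile of $\varphi_{r_n}$ (which, by the first paragraph, is essentially carried by $[-\sigma_n,\sigma_n]$) to decide what to insert or remove on a fixed positive proportion of $\ZT$ — and since $r_n\uparrow1$ with $\sigma_n/\lambda(r_n)\downarrow0$ these scales separate, so a stage-$n$ move leaves earlier stages untouched and the construction converges to a measurable $E$.

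The main obstacle is the a.e.\ assertion. Each stage-$n$ ``good set'', say $G_n^A=\{x:E\text{ is }\varphi_{r_n}\text{-dense near }x+\lambda(r_n)\}$, can only be made of a fixed positive measure, not of full measure (indeed $\int_{\ZT}\Phi_{r_n}(x+\lambda(r_n),\ZI_E)\,dx=|E|$ by \e{c1}, which already caps $|G_n^A|$); so one needs $|\limsup_{n\in A}G_n^A|=|\ZT|$, and likewise for $B$. This forces the stage-$n$ modifications to be (quasi-)independent across $n$ — which the separation of the scales $\lambda(r_n)$ together with the absolute-continuity hypothesis \e{0-3} should permit — after which the divergence part of Borel--Cantelli upgrades ``positive measure at every stage'' to ``full measure in the $\limsup$''. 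Balancing these competing demands (each $G_n^A,G_n^B$ of a definite size, the two families quasi-independent, and $E$ simultaneously adapted to the a priori unknown profiles of all the $\varphi_{r_n}$) is the technical heart of the proof; the remainder is bookkeeping with \e{c1}, \e{0-3} and the continuity of $\lambda$.
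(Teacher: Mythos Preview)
Your reduction of \e{c2} to ``$g(\delta)\ge\beta$ for every $\delta>0$'' is correct and useful, and your diagnosis that the essential difficulty with a \emph{fixed} sequence $r_n$ is the Borel--Cantelli step is accurate. But that obstacle is an artifact of fixing $r_n$, and the paper simply avoids it. The paper does not work along a single sequence $r_n\uparrow 1$; instead, for each $k$ it chooses an \emph{interval} $[u_k,v_k]\subset(0,1)$ with $3\lambda(v_k)\le\lambda(u_k)$ and takes the $2\pi/n_k$-periodic set
\[
U_k=U(n_k,5\delta_k)=\bigcup_{j=0}^{n_k-1}\Bigl(\tfrac{\pi(2j-5\delta_k)}{n_k},\tfrac{\pi(2j+5\delta_k)}{n_k}\Bigr),\qquad n_k=\Bigl[\tfrac{5\pi}{\lambda(u_k)}\Bigr],
\]
then sets $E_1=U_1$ and $E_k=E_{k-1}\cup U_k$ or $E_{k-1}\setminus U_k$ according to the parity of $k$, with $E=\lim E_k$ in $L^1$. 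The point is that for \emph{every} $x\in\ZT$ there is a $j_0$ with $\tfrac{2\pi j_0}{n_k}-x\in[\lambda(u_k)/3,\lambda(u_k)]\subset[\lambda(v_k),\lambda(u_k)]$, so by the continuity of $\lambda$ one can pick $r=r(x,k)\in[u_k,v_k]$ with $x+\lambda(r)=\tfrac{2\pi j_0}{n_k}$, a centre of one of the constituent arcs of $U_k$. The window $T_k(x)$ then sits exactly on that arc, the concentration estimate $\int_{-\delta_k\lambda(u_k)}^{\delta_k\lambda(u_k)}\varphi_r>\beta(1-2^{-k})$ does the rest, and \e{0-3} controls $|\Phi_r(\,\cdot\,,\ZI_E)-\Phi_r(\,\cdot\,,\ZI_{E_k})|$.

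So your ``main obstacle'' --- upgrading positive-measure good sets $G_n$ to a full-measure $\limsup$ via quasi-independence --- never arises: one gets the conclusion for \emph{every} $x$ (which is what the corollary needs), with no probabilistic input at all. Your route could perhaps be pushed through, but the independence claim you rely on is not justified (you would need, e.g., an equidistribution argument for the offsets $x+\lambda(r_n)\bmod 2\pi/N_n$ across $n$), and even then you would only obtain an almost-everywhere statement. The missing idea is precisely to exploit the continuity of $\lambda$ and let $r$ range over an interval long enough that $\lambda(r)$ sweeps through a full period of the stage-$k$ pattern.
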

Observe that if $\varphi_r$ are the Poisson kernels and $\lambda(r)$ satisfies \e{0-1}, then we have $\beta=1$. Thus we get
\begin{corollary}
For any function $\lambda(r):[0,1]\to \ZR$ satisfying \e{0-1}, there exists a harmonic function $u(z)$ on the unit disc with $0\le u(z)\le 1$, such that
 \md0
\limsup_{r\to 1} u\left(re^{i\left(x+\lambda(r)\right)}\right)=1,\quad \liminf_{r\to 1} u\left(re^{i\left(x+\lambda(r)\right)}\right)=0,
 \emd
 at any point $x\in \ZT$.
\end{corollary}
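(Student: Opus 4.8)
The plan is to obtain the corollary as an immediate specialization of \trm{T1} to the Poisson kernel; the only genuine computation is checking that the quantity $\beta$ of \e{c2} equals $1$ under hypothesis \e{0-1}. I would write $P_r(t)=(1-r^2)/(1-2r\cos t+r^2)$ for the Poisson kernel and put $\varphi_r(t)=\tfrac1{2\pi}P_r(t)$, so that $\Phi_r(x,f)=\zP_r(x,f)$. These $\varphi_r$ are nonnegative, satisfy \e{c1}, and satisfy \e{0-3} since $\sup_{0<r<\tau}\|P_r\|_\infty<\infty$ for every $\tau<1$; hence \trm{T1} is applicable to them. Moreover, for any measurable $E\subset\ZT$ the function $u(re^{ix}):=\zP_r(x,\ZI_E)$ is harmonic on $D$, and because $0\le\ZI_E\le1$ the nonnegativity of $P_r$ together with \e{c1} forces $0\le u\le1$ throughout $D$. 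Thus it suffices to exhibit a single $E$ for which $r\mapsto u\!\left(re^{i(x+\lambda(r))}\right)$ has oscillation at least $1$ at every point $x\in\ZT$.

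To compute $\beta$, I would fix $\delta>0$. By \e{0-1}, $\lambda(r)/(1-r)\to\infty$ as $r\to1$, so in particular $\lambda(r)\to0$ and $\lambda(r)>0$ for $r$ near $1$; set $a=a(r)=\delta\lambda(r)\in(0,\pi)$ for such $r$, and note that $a/(1-r)\to\infty$. Using $1-2r\cos t+r^2=(1-r)^2+4r\sin^2(t/2)$ and the inequality $\sin(t/2)\ge t/\pi$ valid on $[0,\pi]$, the mass of $\varphi_r$ outside $[-a,a]$ is estimated by
\md0
\frac1{2\pi}\int_{a\le|t|\le\pi}P_r(t)\,dt\le\frac{(1-r^2)\pi}{4r}\int_a^\pi\frac{dt}{t^2}\le\frac{(1+r)\pi}{4r}\cdot\frac{1-r}{a}\longrightarrow0\quad(r\to1).
\emd
Since $\int_{-a}^{a}\varphi_r(t)\,dt\le1$ always, this gives $\liminf_{r\to1}\int_{-\delta\lambda(r)}^{\delta\lambda(r)}\varphi_r(t)\,dt=1$ for every $\delta>0$, whence $\beta=1$; in particular $\beta>\tfrac12$, so the hypotheses of \trm{T1} hold.

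Applying \trm{T1} then produces a measurable set $E\subset\ZT$ such that
\md0
\limsup_{r\to1}u\!\left(re^{i(x+\lambda(r))}\right)-\liminf_{r\to1}u\!\left(re^{i(x+\lambda(r))}\right)\ge2\beta-1=1\qquad\text{for every }x\in\ZT.
\emd
Since $0\le u\le1$, both the $\limsup$ and the $\liminf$ above lie in $[0,1]$, and a difference of two numbers in $[0,1]$ that is at least $1$ can only be $1-0$; hence the $\limsup$ equals $1$ and the $\liminf$ equals $0$ at every $x$. This is precisely the conclusion of the corollary.

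The step carrying all the weight is the identity $\beta=1$, i.e.\ the standard fact that the Poisson mass concentrates on scale $1-r$ about the origin while \e{0-1} makes $\lambda(r)$ much larger than $1-r$; everything else is routine bookkeeping inside \trm{T1} plus the trivial observation that a $[0,1]$-valued quantity whose oscillation saturates $[0,1]$ must have $\limsup=1$ and $\liminf=0$. I anticipate no real obstacle here — which is, of course, why the statement is recorded as a corollary rather than a theorem.
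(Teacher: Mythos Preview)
Your proposal is correct and follows exactly the route the paper takes: the paper simply asserts in one sentence that for the Poisson kernel under \e{0-1} one has $\beta=1$, and records the corollary as an immediate consequence of \trm{T1}. You have merely supplied the (standard) details the paper omits --- the verification of \e{c1}, \e{0-3}, the computation $\beta=1$, and the observation that oscillation $\ge1$ for a $[0,1]$-valued function forces $\limsup=1$, $\liminf=0$.
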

\begin{theorem}\label{T2}
If a family of functions $\{\varphi_r\}$ satisfies \e{Phi}, \e{c1} and for $\lambda(r)\in C[0,1]$ we have $\beta=1$, then there exists a  function $B\in L^\infty(\ZT)$ which is boundary function of a Blaschke product such that the limit
\md0
\lim_{r\to 1}\Phi_r\left(x+\lambda(r),B\right)
\emd
does not exist for any $x\in\ZT$.
\end{theorem}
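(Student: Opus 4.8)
The plan is to transport the passage from Littlewood's theorem to the Lohwater--Piranian theorem (Theorem~B) to the abstract operator $\Phi_r$, with the hypothesis $\beta=1$ playing the role that the explicit size estimates of the Poisson kernel play classically. First I would unwind $\beta=1$. For fixed $r$ (and $\lambda$ of fixed sign near $1$, which $\beta>0$ forces) the quantity $\int_{-\delta\lambda(r)}^{\delta\lambda(r)}\varphi_r$ is non‑decreasing in $\delta$, hence so is $g(\delta):=\liminf_{r\to1}\int_{-\delta\lambda(r)}^{\delta\lambda(r)}\varphi_r$, and therefore
\[
\beta=\limsup_{\delta\to0}g(\delta)=\inf_{\delta>0}g(\delta).
\]
So $\beta=1$ means that for \emph{every} aperture $\delta>0$ and every $\varepsilon>0$ there is $\rho<1$ with $\int_{-\delta\lambda(r)}^{\delta\lambda(r)}\varphi_r>1-\varepsilon$ for $r\in(\rho,1)$; also, reflecting if necessary, $\lambda>0$ near $1$, and by the intermediate value theorem $\lambda([\rho,1))=(0,M_\rho]$ with $M_\rho=\max_{[\rho,1]}\lambda\to0$ as $\rho\to1$. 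I would fix $\delta_n\downarrow0$, $\varepsilon_n\downarrow0$ and then $\rho_n\uparrow1$ large enough (chosen \emph{after} $\delta_n$, so $M_n:=M_{\rho_n}$ is as small as we wish) with $\int_{-\delta_n\lambda(r)}^{\delta_n\lambda(r)}\varphi_r>1-\varepsilon_n$ for $r\in(\rho_n,1)$. Substituting $t\mapsto x+\lambda(r)-t$ shows that for $r\in(\rho_n,1)$ and $\|g\|_\infty\le1$ the number $\Phi_r(x+\lambda(r),g)$ differs by at most $\varepsilon_n$ from a $\varphi_r$‑weighted mean (total mass in $(1-\varepsilon_n,1]$) of $g$ over the arc $W_{n,r}(x)=\big[x+\lambda(r)(1-\delta_n),\,x+\lambda(r)(1+\delta_n)\big]$, whose centre runs over $(x,x+M_n]$ as $r$ runs over $(\rho_n,1)$ and whose length $2\delta_n\lambda(r)$ is always a $\delta_n$‑fraction of the distance from its centre to $x$. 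Thus it suffices to produce a Blaschke product $B$ such that for every $x\in\ZT$ and every large $n$ there are $r^{+},r^{-}\in(\rho_n,1)$ with $\operatorname{Re}\Phi_{r^{+}}(x+\lambda(r^{+}),B)>\tfrac34$ and $\operatorname{Re}\Phi_{r^{-}}(x+\lambda(r^{-}),B)<-\tfrac34$; this rules out the limit at every point.

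For $B$ I would take an infinite product $B=\prod_nB_n$ of finite Blaschke products
\[
B_n(z)=\prod_{j}\frac{|a_{n,j}|}{a_{n,j}}\cdot\frac{a_{n,j}-z}{1-\overline{a_{n,j}}\,z},\qquad a_{n,j}=(1-s_n)e^{\,ij\theta_n},
\]
with $\theta_n\asymp M_n/3$ (so the grid $\{j\theta_n\}$ is $\theta_n$‑dense in $\ZT$) and $s_n\asymp\ell_n$, where $\delta_nM_n\ll\ell_n\ll\theta_n$. Each factor equals exactly $-1$ at $e^{ij\theta_n}$ and equals $1+O\big(s_n/\operatorname{dist}(e^{i\theta},a_{n,j})\big)$ elsewhere, making one full rotation inside an arc $V_{n,j}\ni e^{ij\theta_n}$ of length $\asymp\ell_n$ with argument‑slope $\asymp1/\ell_n$; so, taking $\ell_n$ small enough relative to $\theta_n$ to absorb the logarithmic loss in summing the factors, the boundary function of $B_n$ is uniformly within $o(1)$ of $1$ off $S_n:=\bigcup_jV_{n,j}$ and within $o(1)$ of $-1$ on the subarc of length $2\delta_nM_n$ about $e^{ij\theta_n}$. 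Since $\sum_{n,j}(1-|a_{n,j}|)\asymp\sum_n(2\pi/\theta_n)s_n\asymp\sum_n\ell_n/\theta_n<\infty$ once $\ell_n/\theta_n$ is summable, $B$ is a genuine Blaschke product with $|B|=1$ a.e. The two inequalities now come about as follows. For $r^{-}$: choose a grid point $j\theta_n\in(x+\theta_n,x+M_n-\theta_n]$ (possible since $M_n>2\theta_n$) and $r^{-}$ with $\lambda(r^{-})=j\theta_n-x$ (possible since $\lambda$ is onto $(0,M_n]$); then $W_{n,r^{-}}(x)$ lies centrally in $V_{n,j}$, where $B$ is within $o(1)$ of $-1$, so $\operatorname{Re}\Phi_{r^{-}}<-\tfrac34$. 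For $r^{+}$: choose $d^{+}\in(0,M_n]$ whose window avoids the transition arcs of all scales $m\le n$ (a set of admissible $d^{+}$ of measure $(1-o(1))M_n$) and on which $B$ is within $o(1)$ of $1$ except on the fine arcs $\bigcup_{m>n}S_m$ it unavoidably meets; provided $\varphi_{r^{+}}$ puts mass $<\tfrac18$ on $W_{n,r^{+}}(x)\cap\bigcup_{m>n}S_m$, this gives $\operatorname{Re}\Phi_{r^{+}}>\tfrac34$.

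The step I expect to be genuinely hard is precisely that last proviso. A ``plateau'' window $W_{n,r^{+}}(x)$ has positive length $2\delta_n\lambda(r^{+})$, so however it is placed it meets the transition arcs $S_m$ of all sufficiently fine scales $m>n$ (whose spacing $\theta_m$ is eventually below the window length), and \trm{T2} contains no bound on $\varphi_r$ uniform for $r$ near $1$, so a priori $\varphi_{r^{+}}$ could carry almost all of its mass on $W_{n,r^{+}}(x)\cap S_m$. This has to be defeated exactly as in the Lohwater--Piranian scheme: the scales $M_n$, the lengths $\ell_n$, and the \emph{positions} of the arcs $V_{n,j}$ inside their slots must be fixed by a nested, lacunary induction that uses the already‑given kernels $\{\varphi_r\}$, arranging $\bigcup_{m>n}S_m$ to be so sparse and so placed that for every $x$ at least one admissible window of scale $n$ meets it in $\varphi_r$‑mass $<\tfrac18$. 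An averaging step over the positions of the $V_{m,j}$ helps — for a fixed window the expected such mass is at most $\sum_{m>n}\ell_m/\theta_m$, which we control — but turning this into a simultaneous choice good for all $x$ (using that the relevant contamination is, for fixed radius, a continuous function of $x$, since it is a convolution) is where the real work lies, and is the crux of the argument.

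Once the contamination is below the oscillation gap $2$ at every $x$ and every large $n$, the inequalities $\operatorname{Re}\Phi_{r^{\pm}}\gtrless\pm\tfrac34$ hold and the theorem follows. A notationally lighter route to the same end is to first build a \emph{singular} inner function $\Theta$ with the displayed oscillation — singular factors replacing the Blaschke factors, which frees one from the Blaschke summability bookkeeping — and then to pass to $B=(\Theta-a)/(1-\bar a\Theta)$ for a value $a$ of small modulus lying outside Frostman's exceptional set: this $B$ is a Blaschke product, and since $|\Theta|=1$ a.e. we have $\Phi_r(x+\lambda(r),B)=\Phi_r(x+\lambda(r),\Theta)+O(|a|)$ uniformly, so the oscillation is preserved.
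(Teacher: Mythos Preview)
Your overall architecture --- an infinite product of finite Blaschke products with equispaced zeros, the $k$-th factor oscillating on arcs of scale $\lambda(u_k)$ --- is exactly the paper's. But the step you single out as ``the crux'', controlling the $\varphi_r$-mass of the fine-scale contamination $\bigcup_{m>n}S_m$ inside a window, is \emph{not} handled in the paper by any positioning or averaging scheme. The paper's proof tacitly uses the absolute continuity hypothesis \e{0-3} on $\{\varphi_r\}$ (stated for Theorem~\ref{T1}; its absence from the hypotheses of Theorem~\ref{T2} is evidently a slip, since the proof invokes the derived bound \e{p10}). With \e{0-3} the resolution is elementary and uniform in $x$: after $v_k<1$ is fixed, one chooses all later $\delta_j$ so small that $\bigl|\bigcup_{j>k}U(n_j,\sqrt[4]{\delta_j})\bigr|\le 10\pi\sum_{j>k}\sqrt[4]{\delta_j}$ falls below the $\delta$ that \e{0-3} supplies for $\tau=v_k$, $\varepsilon=2^{-k}$. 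Then $\int_e\varphi_r<2^{-k}$ for \emph{every} $r\in[u_k,v_k]$ and \emph{every} translate of $e$, so $|\Phi_r(\cdot,B)-\Phi_r(\cdot,B_k)|\le 4\cdot2^{-k}$ uniformly on $\ZT$. Sparseness in measure alone does the job; without \e{0-3} your averaging-over-positions plan is genuinely incomplete and I do not see how to close it.

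There is a second simplification you miss, concerning the \emph{coarser} scales. You want windows on which $B\approx+1$ and $B\approx-1$, which forces you to dodge the transition sets of levels $m<n$ too; with only about three level-$n$ grid points available in $(x,x+M_n]$ this is delicate, and your $r^{-}$ step already presumes $\prod_{m<n}B_m\approx1$ on $V_{n,j}$ without arranging it. The paper sidesteps this. At level $k$ it picks \emph{two} radii $r',r''\in[u_k,v_k]$ sending $x+\lambda(r')$ to a grid point $p=2\pi j_0/n_k$ and $x+\lambda(r'')$ to the adjacent midpoint $p+\pi/n_k$; near $p$ one has $b_k\approx-1$, near the midpoint $b_k\approx+1$. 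Because $n_k$ is chosen large \emph{after} $B_{k-1}$ is fixed, a modulus-of-continuity condition forces $B_{k-1}\approx B_{k-1}(p)$ on both windows. Hence $\Phi_{r'}(x+\lambda(r'),B_k)\approx -B_{k-1}(p)$ and $\Phi_{r''}(x+\lambda(r''),B_k)\approx +B_{k-1}(p)$; since $|B_{k-1}(p)|=1$ on $\ZT$, these differ by $2-o(1)$. The coarse factors are \emph{absorbed into the target values} rather than avoided. Together with the measure-only control of fine scales this yields everywhere divergence with no positioning, no averaging, and no Frostman detour.
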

This theorem generalizes \otrm{OT1}.

\section{Proof of \trm{T1}}
We shall consider the sets
\md1\label{p4}
U(n,\delta)=\bigcup_{j=0}^{n-1}\left(\frac{\pi (2j-\delta)}{n},\frac{\pi (2j+\delta)}{n}\right)\subset \ZT
\emd
in the proofs of this as well as the next theorems.
Using the definition of $\beta$ and  the absolute continuity property \e{0-3}, we may choose numbers  $\delta_k$, $u_k$, $v_k$ $(k\in\ZN)$, satisfying
\md3
& \,\,\,\delta_k<2^{-k-5},\quad 1>v_k>u_k\to 1,\quad 3\lambda(v_k)\le \lambda(u_k)<\pi,\label{p9}\\
&\int_{-\delta_k \lambda(u_k)}^{\delta_k \lambda(u_k)}\varphi_{u_k}(t)dt>\beta(1-2^{-k}), \quad k=1,2,\ldots ,\label{p8}\\
&\int_e|\varphi_r(t)|dt<2^{-k},\label{p10}
\emd
where the last bound holds whenever
\md1\label{p31}
0<r<v_k,\quad |e|\le 10\pi \sum_{j\ge k+1}\sqrt[4]{\delta_j}.
\emd
We shall consider the same sequences \e{p9} with properties \e{p8}-\e{p31} in the proof of \trm{T2} as well. We note that $\sqrt[4]{\delta_j}$ in \e{p31} is necessary only in the proof of \trm{T2}, but for \trm{T1}  just $\delta_j$ is enough.
Denote
\md1\label{c5}
U_k=U(n_k,5\delta_k),\quad n_k=\left[\frac{5\pi}{\lambda(u_k)}\right],\quad k\in \ZN,
\emd
and define the sequence of measurable sets $E_k\subset \ZT$ by
\md3
&E_1=U_1,\\
&E_k=\left\{
\begin{array}{lc}
E_{k-1}\setminus U_k &\hbox{ if }k \hbox{ is even},\\
E_{k-1}\cup U_k&\hbox{ if }k \hbox{ is odd}.
\end{array}\label{c6}
\right.
\emd
It is easy to observe, that if $k<m$, then
\md1\label{c17}
\|\ZI_{E_k}-\ZI_{E_m}\|_1=|E_k\bigtriangleup E_m|\le \sum_{j\ge k+1}|U_j|.
\emd
This implies that $\ZI_{E_n}(t)$ converges to a function $f(t)$ in $L^1$-norm. Using Egorov's theorem, we conclude that $f(t)=\ZI_E(t)$ for some measurable set $E\subset \ZT$. Tending  $m$ to infinity,  from \e{c17} we get
\md1\label{c11}
|E\bigtriangleup E_k|=|(E\setminus E_k)\cup(E_k\setminus E)|\le \sum_{j\ge k+1}|U_j|\le  10\pi \sum_{j\ge k+1}\delta_j.
\emd
Take an arbitrary $x\in \ZT$. There exists an integer  $1\le j_0\le n_k$ such that
\md0
\frac{2\pi j_0}{n_k}-x\in \left[\frac{2\pi }{n_k},\frac{4\pi }{n_k}\right]\subset \left[\frac{\lambda(u_k)}{3},\lambda(u_k)\right]\subset [\lambda(v_k),\lambda(u_k)]
\emd
and therefore, since $\lambda(r)$ is continuous, we may find a number $r$, $u_k\le r\le v_k$, such that
\md1\label{g1}
\lambda(r)=\frac{2\pi j_0}{n_k}-x.
\emd
If  $k\in \ZN$ is odd, then according to the definition of $E_k$ we get
\md0
E_k\supset U_k\supset I=\left(\frac{\pi(2 j_0+5\delta_k)}{n_k},\frac{\pi(2 j_0-5\delta_k)}{n_k}\right).
\emd
Thus, using \e{p8}, \e{g1} as well as the definition of $n_k$ from \e{c5}, we conclude
\md9\label{g15}
\Phi_r(x+\lambda(r),\ZI_{E_k})&\ge \int_I\varphi_r(x+\lambda(r)-t)dt\\
&= \int_I\varphi_{r}\left(\frac{2\pi j_0}{n_k}-t\right)dt\\
&=\int_{-5\pi \delta_k/n_k}^{5\pi \delta_k/n_k}\varphi_{r}\left(t\right)dt\\
&\ge \int_{-\delta_k \lambda(u_k)}^{\delta_k \lambda(u_k)}\varphi_{r}(t)dt>\beta(1-2^{-k}).
\emd
From \e{p10} and \e{c11} it follows that
\md0
\left|\Phi_r\left(t,\ZI_E\right)-\Phi_{r}\left(t,\ZI_{E_k}\right)\right|<2^{-k},\quad t\in\ZT,\quad 0<r<v_k,
\emd
and hence from \e{g15} we obtain
\md1\label{c8}
\limsup_{r\to 1}\Phi_r\left(x+\lambda(r),\ZI_E\right)\ge \beta.
\emd
If $k\in\ZN$ is even, then we have $E_k\cap U_k=\varnothing$ and therefore $E_k\cap I=\varnothing$.
Thus we get
\md2
\Phi_r(x+\lambda(r),\ZI_{E_k})&\le  \int_\ZT\varphi_r(x+\lambda(r)-t)dt-\int_I\varphi_r(x+\lambda(r)-t)dt\\
&\le 1- \int_{-\delta_k \lambda(u_k)}^{\delta_k \lambda(u_k)}\varphi_r\left(t\right)dt\le 1-\beta(1-2^{-k})
\emd
and similarly we get
\md1\label{c9}
\liminf_{r\to 1}\Phi_r\left(x+\lambda(r),\ZI_E\right)\le1-\beta.
\emd
Relations \e{c8} and \e{c9} complete the proof of theorem.

\section{Proof of \trm{T2}}
The following finite Blaschke products
\md1\label{p1}
b(n,\delta,z)=\frac{z^n-\rho^n}{\rho^nz^n-1}=\prod_{k=0}^{n-1}\frac{z-\rho  e^{\frac{2\pi ik}{n}}}{\rho  e^{\frac{2\pi ik}{n}}z-1},\quad \rho=e^{-\sqrt{\delta}/n}.
\emd
plays significant role in the proof of \trm{T2}. Similar products are used in the proof of theorem \otrm{OT1} too. If $z=e^{ix}$, then \e{p1} defines a continuous function in $H^\infty(\ZT)$. We shall use the set $U(n,\delta)$ defined in \e{p5}. The following lemma shows that on $U(n,\delta)$ the function \e{p1} is approximative $-1$, and outside of $U(n,\sqrt[4]{\delta})$ is approximative $1$.
\begin{lemma}\label{L1}
There exists an absolute constant $C>0$ such that
\md3
&\left|b\left(n,\delta,e^{ix}\right)+1\right|\le C\sqrt{\delta},\quad x\in U(n,\delta),\label{p5}\\
&\left|b\left(n,\delta,e^{ix}\right)-1\right|\le C\sqrt[4]{\delta},\quad x\in \ZT\setminus U(n,\sqrt[4]{\delta}).\label{p23}
\emd
\end{lemma}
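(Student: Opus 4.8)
The plan is to reduce the lemma to the single variable $w=z^{n}=e^{inx}$, which lies on the unit circle when $z=e^{ix}$, together with the parameter $s=\rho^{n}=e^{-\sqrt\delta}\in(0,1)$, so that $b(n,\delta,z)=(w-s)/(sw-1)$. The first step is the elementary identities, obtained by bringing the two terms over a common denominator, $b+1=(1+s)(w-1)/(sw-1)$ and $b-1=(1-s)(w+1)/(sw-1)$. Alongside these I record the one-line bounds $1+s\le 2$, $\,|sw-1|\ge 1-|sw|=1-s$, and, for $0<\delta\le 1$, the estimates $\tfrac12\sqrt\delta\le 1-e^{-\sqrt\delta}=1-s\le\sqrt\delta$.

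For the bound \e{p5}: if $x\in U(n,\delta)$, then by the definition \e{p4} the number $nx$ differs from a multiple of $2\pi$ by less than $\pi\delta$, hence $|w-1|=|e^{inx}-1|\le \pi\delta$. Substituting into the identity for $b+1$ and using $|sw-1|\ge 1-s\ge\tfrac12\sqrt\delta$ gives $|b+1|\le (1+s)|w-1|/|sw-1|\le 4\pi\sqrt\delta$, which is \e{p5} with $C=4\pi$.

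For the bound \e{p23}: if $x\in\ZT\setminus U(n,\sqrt[4]\delta)$, then the distance from $nx$ to the nearest multiple of $2\pi$ is at least $\pi\sqrt[4]\delta$ and at most $\pi$, so the concavity inequality $|e^{i\theta}-1|=2|\sin(\theta/2)|\ge \tfrac2\pi|\theta|$ for $|\theta|\le\pi$ yields $|w-1|\ge 2\sqrt[4]\delta$. Then $|sw-1|\ge |w-1|-(1-s)|w|\ge 2\sqrt[4]\delta-\sqrt\delta\ge\sqrt[4]\delta$ once $\delta\le 1$, while the numerator of the identity for $b-1$ is at most $(1-s)\cdot 2\le 2\sqrt\delta$; hence $|b-1|\le 2\sqrt\delta/\sqrt[4]\delta=2\sqrt[4]\delta$. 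Finally, for $\delta$ bounded below by an absolute constant both inequalities are trivial, since $b(n,\delta,\cdot)$ is a finite Blaschke product and therefore $|b(n,\delta,e^{ix})|=1$, forcing $|b\pm 1|\le 2$; enlarging $C$ then covers all $\delta$ at once. I do not expect a real obstacle here: the only point needing care is keeping the exponents $\sqrt\delta$ and $\sqrt[4]\delta$ in step, so that the lower estimate $|sw-1|\ge\sqrt[4]\delta$ is not destroyed by the error term $1-s$ --- which is exactly why the wider $\sqrt[4]\delta$-neighbourhood, and not merely the $\delta$-neighbourhood, has to be excised in \e{p23}.
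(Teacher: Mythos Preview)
Your proof is correct and follows essentially the same route as the paper: both derive the identities $b+1=(1+s)(w-1)/(sw-1)$ and $b-1=(1-s)(w+1)/(sw-1)$ with $w=e^{inx}$, $s=\rho^n=e^{-\sqrt\delta}$, then bound numerator and denominator using elementary estimates on $1-e^{-\sqrt\delta}$ and on $|e^{i\theta}-1|$. The only cosmetic differences are that the paper quotes the two-sided bound $|z|/2\le|e^{z}-1|\le 2|z|$ uniformly, and in \e{p23} it lower-bounds the denominator via $|e^{inx}-e^{\sqrt\delta}|$ after factoring out $e^{-\sqrt\delta}$, whereas you use the triangle inequality $|sw-1|\ge|w-1|-(1-s)$ directly; both give the same $\sqrt[4]\delta$ outcome.
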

\begin{proof}
Deduction of these inequalities based on the inequalities
\md0
\frac{|z|}{2}\le |e^z-1|\le 2|z|, \quad z\in\mathbb{C}.
\emd
If  $x\in U(n,\delta)$, then we have
\md9\label{p5}
\left|b\left(n,\delta,e^{ix}\right)+1\right|&=\left|\frac{(e^{inx}-1)(\rho^n+1)}{\rho^ne^{inx}-1}\right|\le \frac{4\pi \delta}{1-e^{-\sqrt{\delta}}},\\
&\le \frac{4e\pi \delta}{e^{\sqrt{\delta}}-1}\le\frac{8e\pi \delta}{\sqrt{\delta}}\le C\sqrt{\delta}.
\emd
If $x\in \ZT\setminus U(n,\sqrt[4]{\delta})$, then $e^{inx}=e^{i\alpha}$ with $\pi\sqrt[4]{\delta}<|\alpha|<\pi $. Thus we obtain
\md9\label{p23}
\left|b\left(n,\delta,e^{ix}\right)-1\right|&= \left|\frac{(e^{inx}+1)(1-\rho^n)}{\rho^ne^{inx}-1}\right|= \frac{2(e^{\sqrt{\delta}}-1)}{|e^{inx}-e^{\sqrt{\delta}}|}\\
&\le \frac{4 \sqrt{\delta}}{|e^{inx}-1|-|e^{\sqrt{\delta}}-1|}\le\frac{4 \sqrt{\delta}}{\pi\sqrt[4]{\delta}/2-2\sqrt{\delta}}\le C\sqrt[4]{\delta}.
\emd
\end{proof}
\begin{proof}[Proof of \trm{T2}]
First we choose numbers  $\delta_k$, $u_k$, $v_k$ $(k\in\ZN)$, satisfying \e{p9}-\e{p10} with $\beta=1$. Then we
denote
\md1\label{p22}
b_k(x)=b(n_k,\delta_k,e^{ix}), \quad n_k=\left[\frac{6\pi}{\lambda(u_k)}\right],\quad k\in \ZN,
\emd
and
\md0
B_k(x)=\prod_{j=1}^kb_j(x),\quad B(x)=\prod_{j=1}^\infty b_j(x).
\emd
The convergence of the infinite product follows from the bound \e{p3}, which will be obtained bellow. Observe that in the process of selection of the numbers \e{p9} we were free to define $\delta_k>0$ as small as needed. Besides, taking $u_k$ to be close to $1$ we may get $n_k$ as big as needed. Using these notations and \lem{L1}, aside of the conditions \e{p9}-\e{p10} we can additionally claim the bounds
\md3
&\omega\left(2\pi/n_k,B_{k-1}\right)=\sup_{|x-x'|<2\pi/n_k}|B_{k-1}(x)-B_{k-1}(x')|<2^{-k},\label{p11}\\
&\left|b_k(x)+1\right|< 2^{-k}, \quad x\in U(n_k,6\delta_k),\label{p2}\\
&\left|b_k(x)-1\right|<2^{-k},x\in\ZT\setminus U(n_k,\sqrt[4]{\delta_k}). \label{p3}
\emd
From \e{p3} we get
\md9\label{p7}
\left|B(x)-B_k(x)\right|&=\left|\prod_{j\ge k+1}b_j(x)-1\right|\\
&\le \prod_{j\ge k+1}(1+2^{-j})-1<2^{-k+1},\quad x\in \ZT\setminus \bigcup_{j\ge k+1}U\left(n_j,\sqrt[4]{\delta_j}\right).
\emd
 Take an arbitrary $x\in \ZT$. There exists an integer  $1\le j_0\le n_k$ such that
\md0
\frac{2\pi j_0}{n_k}-x\in \left[\frac{2\pi }{n_k},\frac{4\pi }{n_k}\right]\subset  \left[\frac{2\pi }{n_k},\frac{5\pi }{n_k}\right]\subset\left[\frac{\lambda(u_k)}{3},\lambda(u_k)\right]\subset [\lambda(v_k),\lambda(u_k)],
\emd
where the inclusions follow from the definition of $n_k$ (see \e{p22}) and from the inequality $3\lambda(v_k)\le \lambda(u_k)<\pi$ coming from \e{p9}. Thus since $\lambda(r)$ is continuous, we may find numbers $u_k\le r'\le r''\le v_k$, such that
\md1\label{p30}
\lambda(r')=\frac{2\pi j_0}{n_k}-x,\quad \lambda(r'')=\frac{2\pi j_0}{n_k}+\frac{\pi }{n_k}-x.
\emd
For the set
\md0
e=\bigcup_{j\ge k+1}U\left(n_j,\sqrt[4]{\delta_j}\right),
\emd
we have
\md0
|e|=10\pi \sum_{j\ge k+1}\sqrt[4]{\delta_j}.
\emd
So taking $r\in [u_k,v_k]$, from \e{p10} and  \e{p7} we conclude
\md9\label{p20}
\big|\Phi_r(x,B)&-\Phi_r(x,B_k)\big|\\
&\le \int_e\varphi_r(x-t)|B(t)-B_k(t)|dt+2^{-k+1} \int_{ \ZT\setminus e}\varphi_r(x-t)dt\\
&\le 2\cdot 2^{-k}+2^{-k+1}=4\cdot 2^{-k} ,\quad x\in \ZT.
\emd
If
\md0
t\in I=(-\delta_k\lambda(u_k),\delta_k\lambda(u_k))\subset \left(-\frac{6\pi\delta_k}{n_k},\frac{6\pi\delta_k}{n_k}\right),
\emd
then we have
\md2
&\frac{2\pi j_0}{n_k}-t\in U(n_k,6\delta_k),\\
 &\frac{2\pi j_0}{n_k}+\frac{\pi}{n_k}-t\in\ZT\setminus U(n_k,\sqrt[4]{\delta_k}).
\emd
Then, using these relations, \e{p2} and \e{p11}, we get
\md9\label{p12}
\left|B_k\left(\frac{2\pi j_0}{n_k}-t\right)\right.&+\left. B_{k-1}\left(\frac{2\pi j_0}{n_k}\right)\right|\\
&\le\left|B_{k-1}\left(\frac{2\pi j_0}{n_k}-t\right)\right|\left|b_k\left(\frac{2\pi j_0}{n_k}-t\right)+1\right|\\
&+\left|B_{k-1}\left(\frac{2\pi j_0}{n_k}-t\right)-B_{k-1}\left(\frac{2\pi j_0}{n_k}\right)\right|\\
&< 2^{-k}+2^{-k}=2^{-k+1}
\emd
and
\md9\label{p13}
\left|B_k\left(\frac{2\pi j_0}{n_k}+\frac{\pi}{n_k}-t\right)\right.&-\left. B_{k-1}\left(\frac{2\pi j_0}{n_k}\right)\right|\\
&\le\left|B_{k-1}\left(\frac{2\pi j_0}{n_k}+\frac{\pi}{n_k}-t\right)\right|\left|b_k\left(\frac{2\pi j_0}{n_k}+\frac{\pi}{n_k}-t\right)-1\right|\\
&+\left|B_{k-1}\left(\frac{2\pi j_0}{n_k}+\frac{\pi}{n_k}-t\right)-B_{k-1}\left(\frac{2\pi j_0}{n_k}\right)\right|\\
&< 2^{-k}+2^{-k}=2^{-k+1}.
\emd
On the other hand, using \e{p8}, \e{p30} and \e{p12}, we get
\md9\label{c15}
\left|\Phi_{r'}(x+\lambda(r'),B_k)\right.&+\left.B_{k-1}\left(\frac{2\pi j_0}{n_k}\right)\right|\\
= & \left|\int_\ZT\varphi_{r'}(t)B_k(x+\lambda(r')-t)dt+B_{k-1}\left(\frac{2\pi j_0}{n_k}\right)\right|\\
=&  \left|\int_\ZT\varphi_{r'}(t)\left[B_k\left(\frac{2\pi j_0}{n_k}-t\right)+B_{k-1}\left(\frac{2\pi j_0}{n_k}\right)\right]dt\right|\\
\le&  \left|\int_I\varphi_{r'}(t)\left[B_k\left(\frac{2\pi j_0}{n_k}-t\right)+B_{k-1}\left(\frac{2\pi j_0}{n_k}\right)\right]dt\right|\\
+&\left|\int_{I^c}\varphi_{r'}(t)\left[B_k\left(\frac{2\pi j_0}{n_k}-t\right)+B_{k-1}\left(\frac{2\pi j_0}{n_k}\right)\right]dt\right|\\
\le& 2^{-k+1}\int_I\varphi_{r'}(t)dt+2\cdot 2^{-k}\le 4\cdot 2^{-k}.
\emd
Similarly, using \e{p13}, we conclude
\md1\label{c16}
\left|\Phi_{r''}\left(x+\lambda(r''),B_k\right)-B_{k-1}\left(\frac{2\pi j_0}{n_k}\right)\right|\le 4\cdot 2^{-k}.
\emd
From \e{p20}, \e{c15} and \e{c16} it follows that
\md0
\left|\Phi_{r'}(x+\lambda(r'),B)-\Phi_{r''}\left(x+\lambda(r''),B\right)\right|\ge 1-16\cdot 2^{-k},
\emd
which implies the divergence of $ \Phi_{r}(x+\lambda(r),B)$ at a point $x$. The theorem is proved.
\end{proof}
\thebibliography{99}
\bibitem{Aik1}
H. Aikawa,  \textit{Harmonic functions having no tangential limits}, Proc. Amer. Math. Soc., 1990, vol. 108, no. 2, 457--464.
\bibitem{Aik2}
H. Aikawa,  \textit{Harmonic functions and Green potential having no tangential limits}, J. London Math. Soc., 1991, vol. 43, 125--136.
\bibitem{DiBi}
F. Di Biase,  \textit{Tangential curves and Fatou's theorem on trees }, J. London Math. Soc., 1998, vol. 58, no. 2, 331--341
\bibitem{BSSW}
F. Di Biase, A. Stokolos, O. Svensson and T. Weiss,  \textit{On the sharpness of the Stolz approach}, Annales Acad. Sci. Fennicae, 2006,
vol. 31,  47--59.
\bibitem{Bar}
D. A. Brannan and J. G. Clunie, \textit{Aspects of contemporary complex analysis}, Academic
Press, 1980.
\bibitem{Fat}
P. Fatou,  \textit{S\'{e}ries trigonom\'{e}triques et s\'{e}ries de Taylor, Acta Math.},  1906, vol. 30, 335--400.
\bibitem{Kro}
I. N. Katkovskaya and V. G. Krotov, \textit{Strong-Type Inequality for Convolution with Square Root of the Poisson Kernel}, Mathematical Notes, 2004, vol. 75, no. 4, 542–552.
\bibitem{Lit}
J. E. Littlewood,  \textit{On a theorem of Fatou, Journal of London Math. Soc.}, 1927, vol. 2, 172--176.
\bibitem{LoPi}
A. J. Lohwater and G. Piranian,  \textit{The boundary behavior of functions analytic in unit disk}, Ann. Acad. Sci. Fenn., Ser A1, 1957, vol. 239, 1--17.
\bibitem{NaSt}
A. Nagel and E. M. Stein ,  \textit{On certain maximal functions and approach regions}, Adv. Math., 1984, vol. 54, 83--106.
\bibitem{Ron1}
J.-O. R\"{o}nning,  \textit{Convergence results for the square root of the Poisson kernel}, Math. Scand., 1997, vol. 81, no. 2, 219--235.
\bibitem{Ron2}
J.-O. R\"{o}nning,  \textit{On convergence for the square root of the Poisson kernel in symmetric spaces of rank 1}, Studia Math., 1997,
vol. 125, no. 3, 219--229.
\bibitem{Ron3}
J.-O. R\"{o}nning,  \textit{Convergence results for the square root of the Poisson kernel in the bidisk }, Math. Scand., 1999, vol. 84, no. 1, 81--92.
\bibitem{Sae}
S. Saeki,  \textit{On Fatou-type theorems for non radial kernels, Math. Scand.}, 1996, vol. 78, 133--160.
\bibitem{Sog1}
P. Sjo\"{g}ren,  \textit{Une remarque sur la convergence des fonctions propres du laplacien \`{a} valeur propre critique},
Th\'{e}orie du potentiel (Orsay, 1983), Lecture Notes in Math., vol. 1096, Springer, Berlin, 1984, pp. 544-548
\bibitem{Sog2}
P. Sjo\"{g}ren,  \textit{Convergence for the square root of the Poisson kernel, Pacific J. Math.}, 1988, vol. 131, no 2, 361--391.
\bibitem{Sog3}
P. Sjo\"{g}ren,  \textit{Approach regions for the square root of the Poisson kernel and bounded functions}, Bull. Austral. Math. Soc., 1997, vol. 55, no 3, 521--527.
\bibitem{Zyg}
A. Zygmund, On a theorem of Littlewood, Summa Brasil Math., 1949, vol. 2, 51--57

 \end{document}